\documentclass{amsart}
\usepackage[english]{babel}
\usepackage{amsfonts, amssymb, amsmath}
\usepackage{xcolor}
\usepackage{comment}

\subjclass[2020]{Primary 37C50, 37B05; Secondary 54H20, 37B20}
\keywords{Set-valued maps, shadowing property, periodic shadowing, expansivity, chain transitivity, isometric involution.}

\title{Periodic Shadowing for Set-Valued Maps}

\author[M. Oliveira]{M.C. Oliveira}
\address{Universidade Estadual do Maranhão, 65055-310, São Luís, Brazil.}
\email{marlonoliveira@professor.uema.br}

\newtheorem{theorem}{Theorem}[section]
\newtheorem{corollary}[theorem]{Corollary}
\newtheorem{lemma}[theorem]{Lemma}
\newtheorem{proposition}[theorem]{Proposition}

\theoremstyle{definition}
\newtheorem{definition}[theorem]{Definition}
\newtheorem{remark}[theorem]{Remark}
\newtheorem{ex}[theorem]{Example}

\begin{document}

\begin{abstract}
We study shadowing-type properties for set-valued dynamical systems. In particular, we investigate the periodic shadowing property and its relationship with expansivity and chain transitivity. We establish that for positively expansive set-valued maps on compact metric spaces, the shadowing property implies the periodic shadowing property. Furthermore, we show that for upper semicontinuous chain transitive set-valued maps, periodic shadowing implies both shadowing and topological transitivity. We also present a general construction of set-valued maps with shadowing arising from single-valued systems admitting an isometric involution. Several examples, including systems from symbolic dynamics, are provided to illustrate the theory.
\end{abstract}

\maketitle

\section{Introduction}

A discrete dynamical system is defined as a pair $(X,f)$, where $X$ is a compact metric space and $f:X\to X$ is a continuous map. A natural generalization consists of replacing $f$ by a set-valued map $F:X\to 2^X$, where $2^X$ denotes the family of all nonempty closed subsets of $X$. Such systems have attracted considerable attention due to their applications to differential inclusions, control systems, symbolic dynamics, and uncertainty models; see, for example, \cite{Guzik2017,RT,CordeiroPacifico2016,HuangLiang2017}.

The periodic shadowing property strengthens the classical shadowing property by requiring periodic pseudo-orbits to be shadowed by genuine periodic orbits. In the single-valued setting, it is closely related to expansivity, recurrence, and transitivity-type properties, playing an important role in the global organization of dynamical systems \cite{Koscielniak1996,Koscielniak2005,DarabiForouzanfar2017,OsipovPilyuginTikhomirov2010,Lee2013}.

Shadowing for set-valued maps has attracted increasing attention in recent years. In particular, Raines and Tennant \cite{RT} and Zhengyu \cite{Zhengyu} investigated shadowing and inverse limit structures for set-valued systems, establishing similarities and important differences with the classical theory. However, the periodic shadowing property remains poorly understood in the set-valued setting, especially regarding its relationship with expansivity, chain transitivity, and topological transitivity.

The purpose of this paper is to investigate the periodic shadowing property for set-valued dynamical systems and clarify its relationship with shadowing and transitivity-type properties. Due to the nonuniqueness of trajectories, extending orbit tracing properties to the set-valued setting is far from straightforward. Our main contributions are threefold. First, we prove that for positively expansive set-valued maps on compact metric spaces, shadowing implies periodic shadowing. Second, we show that for chain transitive set-valued maps, periodic shadowing implies both shadowing and topological transitivity, extending fundamental implications from the classical theory. We also investigate permanence properties of periodic shadowing, including invariance under iteration, inversion, and Cartesian products and provide examples illustrating the necessity of the imposed assumptions.

The remainder of the paper is organized as follows. In Section 2, we present preliminaries regarding topological dynamics for both single-valued and set-valued maps. Section 3 is dedicated to introduce the construction of set-valued maps via isometric involutions and discuss the preservation of fundamental dynamical properties. In Section 4, we establish our  contributions, providing the proofs for Theorem~\ref{theoremA}  and Theorem~\ref{theoremB}. Section 5 explores the permanence and stability properties of the periodic shadowing property. Finally, in Section 6, we present examples to illustrate the scope and applicability of our results.

\section{Preliminaries}
\subsection{Classical notions}

In this subsection, we recall some fundamental notions and results from topological dynamics that will serve as motivation for our set-valued extensions.

A sequence $\{x_i\}_{i\in\mathbb{Z}}$ is called an \emph{orbit} of $(X,f)$ if
\[
x_{i+1}=f(x_i)
\quad \text{for all } i\in\mathbb{Z}.
\]
A sequence $\{x_i\}_{i\in\mathbb{Z}}$ is a \emph{$\delta$-pseudo-orbit} if
\[
d(f(x_i),x_{i+1})<\delta
\quad \text{for all } i\in\mathbb{Z}.
\]
Similarly, a finite sequence $\{x_i\}_{i=0}^{n}$ is called a \emph{$\delta$-chain} of length $n$ if
\[
d(f(x_i),x_{i+1})<\delta
\quad \text{for all } 0\leq i\leq n-1.
\]

A dynamical system $(X,f)$ has the \emph{shadowing property} if for every $\varepsilon>0$ there exists $\delta>0$ such that every $\delta$-pseudo-orbit is $\varepsilon$-shadowed by some point $x\in X$, that is,
\[
d(f^i(x),x_i)<\varepsilon
\quad \text{for all } i\in\mathbb{Z}.
\]
For further details on shadowing, see \cite{AokiHiraide}.

An important variation of shadowing is the \emph{periodic shadowing property}, introduced by Ko\'scielniak in \cite{Koscielniak1996}. A homeomorphism $f:X\to X$ is said to have the periodic shadowing property if for every $\varepsilon>0$ there exists $\delta>0$ such that every periodic $\delta$-pseudo-orbit is $\varepsilon$-shadowed by a periodic point $p\in X$, namely,
\[
d(f^i(p),x_i)<\varepsilon
\quad \text{for all} i\in\mathbb{Z}.
\]
This property is not automatic; for example, irrational rotations fail to satisfy it.

A dynamical system $(X,f)$ is \emph{topologically transitive} if for every pair of nonempty open sets $U,V\subset X$, there exists $n\in\mathbb{Z}$ such that
\[
f^n(U)\cap V\neq\emptyset.
\]
Moreover, $(X,f)$ is \emph{chain transitive} if for every $\delta>0$ and every pair of points $x,y\in X$, there exists a $\delta$-chain joining $x$ to $y$.

We conclude this subsection with expansivity, introduced by Utz in \cite{Utz}, which plays a fundamental role in the relationship between shadowing and periodic shadowing. A homeomorphism $f:X\to X$ on a compact metric space $X$ is \emph{expansive} if there exists $\delta>0$ such that for any two distinct points $x,y\in X$, one can find $n\in\mathbb{Z}$ satisfying
\[
d(f^n(x),f^n(y))>\delta.
\]

The following result, due to Darabi and Forouzanfar, establishes a connection between shadowing and periodic shadowing under expansivity.

\begin{theorem}[\cite{DarabiForouzanfar2017}]
\label{DTheA}
Let $X$ be a compact metric space and let $f:X\to X$ be an expansive homeomorphism. If $f$ has the shadowing property, then $f$ also has the periodic shadowing property.
\end{theorem}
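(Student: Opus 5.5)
The plan is to take the shadowing point itself and show that expansivity forces it to be periodic. Let $e>0$ be an expansivity constant for $f$. Given $\varepsilon>0$, we may shrink it and assume $\varepsilon<e/2$. Let $\delta>0$ be the constant given by the shadowing property for this $\varepsilon$.

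Now take a periodic $\delta$-pseudo-orbit $\{x_i\}_{i\in\mathbb{Z}}$ with period $k$, so that $x_{i+k}=x_i$ for all $i\in\mathbb{Z}$. By shadowing, there is a point $p\in X$ with
\[
d(f^i(p),x_i)<\varepsilon \quad \text{for all } i\in\mathbb{Z}.
\]

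The key step is to compare $p$ with $q=f^k(p)$. For every $i\in\mathbb{Z}$ we have
\[
d(f^i(q),x_i)=d(f^{i+k}(p),x_{i+k})<\varepsilon,
\]
using the periodicity $x_{i+k}=x_i$. Thus $q$ also $\varepsilon$-shadows the same pseudo-orbit. By the triangle inequality,
\[
d(f^i(p),f^i(q))<2\varepsilon<e \quad \text{for all } i\in\mathbb{Z}.
\]
Expansivity with constant $e$ then forces $p=q$, so $f^k(p)=p$. Hence $p$ is a periodic point that $\varepsilon$-shadows $\{x_i\}$, which is exactly the periodic shadowing property.

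The only point needing care is the choice of constants. The argument needs $2\varepsilon\le e$, and the paper's expansivity definition uses the strict inequality $d(f^n(x),f^n(y))>\delta$, so the bound $d(f^i(p),f^i(q))<2\varepsilon<e$ rules out any separation. Since periodic shadowing for small $\varepsilon$ implies it for larger $\varepsilon$ (using the same $\delta$), reducing $\varepsilon$ at the start loses nothing. Invertibility of $f$ is used only so that orbits are bi-infinite and $f^i(q)$ makes sense for negative $i$. I do not expect any genuine obstacle: this is the standard uniqueness-of-shadowing argument under expansivity.
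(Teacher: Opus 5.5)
Your proof is correct: you compare the shadowing point $p$ with $f^k(p)$, note that both $\varepsilon$-shadow the same periodic pseudo-orbit, and use expansivity with $2\varepsilon<e$ to force $f^k(p)=p$. The paper only cites this classical result from Darabi and Forouzanfar without proving it, but its proof of the set-valued analogue, Theorem~\ref{theoremA}, runs exactly this argument, comparing the shadowing orbit $\{q_n\}$ with its shift $\{q_{n+k}\}$ under the choice $\varepsilon<\alpha/2$.
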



The converse implication requires an additional recurrence assumption.

\begin{theorem}[\cite{DarabiForouzanfar2017}]
Let $X$ be a compact metric space and let $f:X\to X$ be a chain transitive dynamical system. If $f$ has the periodic shadowing property, then $f$ has the shadowing property and is topologically transitive.
\end{theorem}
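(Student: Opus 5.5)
The plan is to show the two conclusions separately: first shadowing, then topological transitivity. Throughout, fix $\varepsilon>0$ and let $\delta>0$ be given by the periodic shadowing property for $\varepsilon$. Since chain transitivity gives no quantitative control, the key tool is to \emph{close up} finite pieces of an arbitrary pseudo-orbit into periodic pseudo-orbits, using chains supplied by chain transitivity, and then shadow those by periodic points.

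\textbf{Step 1 (shadowing).} Let $\{x_i\}_{i\in\mathbb{Z}}$ be a $\delta$-pseudo-orbit and fix $n\in\mathbb{N}$. By chain transitivity there is a $\delta$-chain from $x_n$ to $x_{-n}$; more precisely, a chain $x_n=y_0,y_1,\dots,y_m$ with $d(f(y_j),y_{j+1})<\delta$ and $y_m=x_{-n}$. The block $x_{-n},\dots,x_n$ followed by $y_1,\dots,y_{m-1}$ and then returning to $x_{-n}$ is a finite cycle. Repeating it periodically over $\mathbb{Z}$ gives a periodic $\delta$-pseudo-orbit $\{z^{(n)}_i\}$ with $z^{(n)}_i=x_i$ for $|i|\le n$. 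Periodic shadowing yields a periodic point $p_n$ with $d(f^i(p_n),x_i)<\varepsilon$ for $|i|\le n$. By compactness, take a limit point $p$ of $\{p_n\}$. Since $f$ is a homeomorphism, each $f^i$ is continuous, so $d(f^i(p),x_i)\le\varepsilon$ for every $i\in\mathbb{Z}$. Running the argument with $\varepsilon/2$ in place of $\varepsilon$ gives strict inequality, and hence shadowing.

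\textbf{Step 2 (transitivity).} Let $U,V$ be nonempty open sets. Choose $u\in U$ and $v\in V$ and $\varepsilon>0$ with $B(u,\varepsilon)\subset U$ and $B(v,\varepsilon)\subset V$; let $\delta$ correspond to $\varepsilon$. By chain transitivity there are $\delta$-chains from $u$ to $v$ and from $v$ back to $u$. Concatenating them gives a closed cycle through $u$ at time $0$ and through $v$ at some time $k>0$, hence a periodic $\delta$-pseudo-orbit. A periodic point $p$ shadowing it satisfies $p\in U$ and $f^k(p)\in V$, so $f^k(U)\cap V\neq\emptyset$.

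\textbf{Main obstacle.} The expected difficulty is only in the gluing at the junction points. The concatenated sequence must satisfy $d(f(w_i),w_{i+1})<\delta$ at every index, including where the chain meets the original block. Defining chains by $d(f(y_j),y_{j+1})<\delta$, and starting the chain exactly at $x_n$ and ending exactly at $x_{-n}$, makes every junction a legitimate $\delta$-step.

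One degenerate case needs checking: when the chain has length $1$, the inequality $d(f(x_n),x_{-n})<\delta$ is itself the closing step, which is fine. The limiting argument in Step 1 is routine.
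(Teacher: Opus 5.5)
Your argument is correct and is essentially the one the paper uses for its set-valued analogue, Theorem~\ref{theoremB}; the classical statement itself is only cited in the paper, not proved. In both, a finite block of the pseudo-orbit is closed into a periodic $\delta$-pseudo-orbit by a chain from chain transitivity and then shadowed by a periodic point, and the only differences are cosmetic: you carry out the compactness limit directly on two-sided blocks instead of invoking the finite-shadowing lemma, and you obtain transitivity by periodically shadowing the loop $u\to v\to u$ rather than shadowing the open chain from $u$ to $v$.
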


The results above motivate the questions addressed in this paper, namely whether analogous implications remain valid in the set-valued setting.

\subsection{Set-Valued Maps}

We now recall some basic notions concerning set-valued maps and set-valued dynamical systems.

Let $(X,d)$ be a metric space. For each $x\in X$ and $A\subset X$, define
\[
d(x,A)=\inf\{d(x,y)\; ;\; y\in A\}.
\]

We denote by $2^X$ the family of all nonempty compact subsets of $X$. The \emph{Hausdorff metric} on $2^X$ is defined by
\[
d_H(A,B)=
\max\left\{
\sup_{a\in A} d(a,B),
\sup_{b\in B} d(b,A)
\right\}.
\]

The following continuity notions are standard in set-valued analysis.

\begin{definition}
Let $X$ be a metric space with metric $d$. A set-valued map $F:X\to 2^X$ is called:
\begin{itemize}
    \item[(I)] \textbf{Upper semicontinuous} if for every $x\in X$ and every open set $U\supset F(x)$, there exists an open neighborhood $V$ of $x$ such that
    \[
    F(y)\subset U
    \quad \text{for all } y\in V.
    \]

    \item[(II)] \textbf{Lower semicontinuous} if for every $x\in X$ and every open set $U$ satisfying
    \[
    U\cap F(x)\neq\emptyset,
    \]
    there exists an open neighborhood $V$ of $x$ such that
    \[
    F(y)\cap U\neq\emptyset
    \quad \text{for all } y\in V.
    \]

    \item[(III)] \textbf{Continuous} if it is both upper and lower semicontinuous.
\end{itemize}
\end{definition}

A pair $(X,F)$, where $F:X\to 2^X$, is called a \emph{set-valued dynamical system}.

Since set-valued maps need not be invertible, we restrict our attention to forward orbits. An \emph{orbit} of a point $x\in X$ under $F$ is a sequence $\{x_i\}_{i\in\mathbb{N}}$ satisfying
\[
x_{i+1}\in F(x_i)
\quad \text{for all } i\in\mathbb{N}.
\]
A point $x\in X$ is called \emph{periodic} if there exist an orbit $\{x_i\}_{i\in\mathbb{N}}$ and an integer $m\in\mathbb{N}^*$ such that
\[
x_0=x
\quad \text{and} \quad
x_{n+m}=x_n
\quad \text{for all } n\in\mathbb{N}.
\]

We also define the inverse image associated with a set-valued map.

\begin{definition}
For each $y\in X$, define
\[
F^{-1}(y):=
\{x\in X \; ; \; y\in F(x)\}.
\]
The set $F^{-1}(y)$ is called the \emph{inverse image} of $y$ under $F$.
\end{definition}

We next recall dynamical notions that will be relevant throughout the paper.

\begin{definition}[\cite{RW}]
A set-valued map $F:X\to 2^X$ is called \emph{positively expansive} if there exists $\alpha>0$ such that for every pair of distinct points $x,y\in X$, and for any orbits $\{x_n\}_{n\in\mathbb{N}}$ and $\{y_n\}_{n\in\mathbb{N}}$ of $F$, there exists $n\in\mathbb{N}$ satisfying
\[
d(x_n,y_n)>\alpha.
\]
\end{definition}

To obtain converse-type implications, an additional recurrence assumption is generally required, namely chain transitivity; see \cite{WongSalleh2019,WS}.

\begin{definition}
Let $F:X\to 2^X$ be a set-valued map on a metric space $X$.

\begin{enumerate}
\item The map $F$ is said to be \emph{topologically transitive} if for every pair of nonempty open sets $U,V\subset X$, there exist an orbit $\{x_n\}_{n\in\mathbb{N}}$ and an integer $n\geq 0$ such that
\[
x_0\in U
\quad \text{and} \quad
x_n\in V.
\]

\item The map $F$ is said to be \emph{chain transitive} if for every $\delta>0$ and every pair of points $x,y\in X$, there exists a $\delta$-chain joining $x$ to $y$.
\end{enumerate}
\end{definition}

We now present shadowing and related notions for set-valued maps; see \cite{RT,MMT}.

\begin{definition}
Let $F:X\to 2^X$ be a set-valued map on a metric space $X$.

\begin{enumerate}
    \item Given $\delta>0$, a sequence $\{x_i\}_{i\in\mathbb{N}}$ is called a \emph{$\delta$-pseudo-orbit} of $F$ if
    \[
    d(x_{i+1},F(x_i))<\delta
    \quad \text{for all } i\in\mathbb{N}.
    \]

    \item The map $F$ has the \emph{shadowing property} if for every $\varepsilon>0$ there exists $\delta>0$ such that every $\delta$-pseudo-orbit $\{x_i\}_{i\in\mathbb{N}}$ is $\varepsilon$-shadowed by an orbit $\{z_i\}_{i\in\mathbb{N}}$, that is,
    \[
    d(z_i,x_i)<\varepsilon
    \quad \text{for all } i\in\mathbb{N}.
    \]

    \item The map $F$ has the \emph{finite shadowing property} if for every $\varepsilon>0$ there exists $\delta>0$ such that every finite $\delta$-pseudo-orbit
    \[
    \{x_i\}_{i=0}^{k-1}
    \]
    is $\varepsilon$-shadowed by a finite orbit
    \[
    \{z_0,z_1,\ldots,z_{k-1}\},
    \]
    satisfying
    \[
    d(z_i,x_i)<\varepsilon
    \quad \text{for all } 0\leq i\leq k-1.
    \]
\end{enumerate}
\end{definition}

We conclude this subsection by introducing the periodic shadowing property in the set-valued setting.

\begin{definition}
A set-valued map $F:X\to 2^X$ is said to have the \emph{periodic shadowing property} if for every $\varepsilon>0$ there exists $\delta>0$ such that every periodic $\delta$-pseudo-orbit $\{x_i\}_{i\in\mathbb{N}}$ is $\varepsilon$-shadowed by a periodic point $z\in X$ with a periodic orbit $\{z_i\}_{i\in\mathbb{N}}$, namely,
\[
d(z_i,x_i)<\varepsilon
\quad \text{for all } i\in\mathbb{N}.
\]
\end{definition}

\section{A Construction via Isometric Involutions}

We now introduce a general construction that produces set-valued dynamical systems from single-valued maps through isometric involutions. This construction provides a flexible mechanism for generating set-valued systems inheriting relevant dynamical properties from the underlying deterministic dynamics.

\begin{definition}
Let $(X,d)$ be a metric space. A map $R:X\to X$ is called an \emph{isometric involution} if $
R^2=\mathrm{id}$ and $
d(R(x),R(y))=d(x,y)$ for all $x,y\in X$.
\end{definition}

Given a map $f:X\to X$ and an isometric involution $R:X\to X$ satisfying $
R\circ f=f\circ R$, we define the set-valued map $F(x)=\{f(x),R(f(x))\}$.

The next result shows that several properties are preserved under this construction, in particular shadowing.

\begin{theorem}[Construction via Isometric Involutions]
\label{thm:involution}
Let $(X,d)$ be a metric space and let $f:X\to X$ be a map. Assume that $R:X\to X$ is an isometric involution satisfying $
R\circ f=f\circ R$.
Define the set-valued map $F:X\to 2^X$ by $F(x)=\{f(x),R(f(x))\}$.

Then the following assertions hold:

\begin{enumerate}
    \item If $f$ has the shadowing property, then $F$ has the shadowing property.

    \item If $f$ has the periodic shadowing property, then $F$ has the periodic shadowing property.

    \item If $f$ is topologically transitive, then $F$ is topologically transitive.

    \item If $f$ is chain transitive, then $F$ is chain transitive.
\end{enumerate}
\end{theorem}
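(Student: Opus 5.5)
The whole argument rests on one observation: every orbit of $F$ is a copy of an $f$-orbit with some points flipped by $R$. If $\{z_i\}$ is an orbit of $F$, then $z_{i+1}=R^{e_i}f(z_i)$ with $e_i\in\{0,1\}$. Since $R$ commutes with $f$ and $R^2=\mathrm{id}$, induction gives $z_i=R^{t_i}f^i(z_0)$ with $t_i=\sum_{j<i}e_j \pmod 2$. Conversely, for any choice of bits $e_i$ and any point $w$, the sequence $R^{t_i}f^i(w)$ is an orbit of $F$. Note also that $F(x)$ has at most two points, so it is compact and $F$ maps into $2^X$. Parts (3) and (4) are then immediate. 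Every $f$-orbit is an $F$-orbit (take all $e_i=0$), which gives (3). For (4), $d(x_{i+1},F(x_i))\le d(x_{i+1},f(x_i))$, so every $\delta$-chain of $f$ is a $\delta$-chain of $F$.

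For (1), I convert pseudo-orbits of $F$ into pseudo-orbits of $f$ by ``unflipping''. Let $\{x_i\}$ be a $\delta$-pseudo-orbit of $F$. Since $F(x_i)$ is finite, choose $e_i\in\{0,1\}$ with $d(x_{i+1},R^{e_i}f(x_i))<\delta$. Put $t_0=0$, $t_{i+1}=t_i+e_i \pmod 2$, and $y_i=R^{t_i}x_i$. Because $R$ is an isometric involution commuting with $f$,
\[
d(f(y_i),y_{i+1})=d(R^{t_i}f(x_i),R^{t_i+e_i}x_{i+1})=d(R^{e_i}f(x_i),x_{i+1})<\delta ,
\]
so $\{y_i\}$ is a $\delta$-pseudo-orbit of $f$. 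Let $w$ be a point whose $f$-orbit $\varepsilon$-shadows $\{y_i\}$. Then $z_i:=R^{t_i}f^i(w)$ is an orbit of $F$, and
\[
d(z_i,x_i)=d(f^i(w),R^{t_i}x_i)=d(f^i(w),y_i)<\varepsilon .
\]
Here I use the shadowing of $f$ for forward pseudo-orbits. If $f$ is a homeomorphism and shadowing is meant for $\mathbb{Z}$-indexed pseudo-orbits, I first extend $\{y_i\}$ backwards by the true orbit $y_{-i}=f^{-i}(y_0)$, which costs nothing.

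For (2), the difficulty is that the unflipped sequence $\{y_i\}$ need not be periodic even when $\{x_i\}$ is; I expect this parity bookkeeping to be the main obstacle. Suppose $x_{i+m}=x_i$ for all $i$. I choose the bits $e_i$ depending only on $i \bmod m$, which is possible because the defining condition depends only on the pair $(x_i,x_{i+1})$. Then $t_{i+m}=t_i+t_m$, and hence $t_{i+2m}=t_i$, so $\{y_i\}$ is $2m$-periodic. It may have only the opposite symmetry $y_{i+m}=Ry_i$ at period $m$, which is why I pass to $2m$. By the periodic shadowing of $f$, $\{y_i\}$ is $\varepsilon$-shadowed by the orbit of a periodic point $p$, say $f^k(p)=p$. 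Set $z_i=R^{t_i}f^i(p)$. This is an $F$-orbit and $\varepsilon$-shadows $\{x_i\}$ exactly as in (1). Finally, $t$ is $2m$-periodic and $f^i(p)$ is $k$-periodic, so $z_{i+2mk}=z_i$, and $z_0=p$ is a periodic point of $F$ in the sense of the definition.
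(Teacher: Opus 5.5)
Your proposal is correct and follows the same route as the paper: unflip the $F$-pseudo-orbit by a cumulative parity of $R$'s into an $f$-pseudo-orbit, shadow it with $f$, and reflip to obtain an $F$-orbit; parts (3) and (4) come, as in the paper, from the fact that $f$-orbits and $f$-chains are also $F$-orbits and $F$-chains. For (2), your choice of the bits $e_i$ depending only on $i \bmod m$, which makes the unflipped sequence $2m$-periodic and the resulting $F$-orbit $2mk$-periodic, spells out the step the paper compresses into ``follows similarly'' without noting that the period may double.
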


\begin{proof}

We first prove (1). Let $\varepsilon>0$ and let $\delta>0$ be given by the shadowing property of $f$. Let $
\{x_n\}_{n\in\mathbb N}$ be a $\delta$-pseudo-orbit of $F$. We inductively construct a $\delta$-pseudo-orbit $
\{y_n\}_{n\in\mathbb N}$ of $f$.

Choose $y_0\in\{x_0,R(x_0)\}$. Assume first that $y_0=x_0$. Since $d(x_1,F(x_0))<\delta$, if $d(x_1,f(x_0))<\delta$, set $y_1=x_1$; otherwise define $
y_1=R(x_1)$. In this case,
\[
d(R(x_1),f(x_0))
=
d(x_1,R(f(x_0)))
<
\delta,
\]
since $R$ is an isometric involution commuting with $f$. Hence
$d(y_1,f(y_0))<\delta$. The case $y_0=R(x_0)$ is analogous.

Proceeding inductively, suppose that $y_n\in\{x_n,R(x_n)\}$ has been chosen so that $d(y_n,f(y_{n-1}))<\delta$. If $d(x_{n+1},f(x_n))<\delta$, define
$y_{n+1}=x_{n+1}$ when $y_n=x_n$, and $y_{n+1}=R(x_{n+1})$ otherwise. 

If instead $d(x_{n+1},R(f(x_n)))<\delta$, reverse the choice. Since $R$ is an isometric involution commuting with $f$, we obtain $d(y_{n+1},f(y_n))<\delta$ in either case. Thus, $\{y_n\}_{n\in\mathbb N}$ is a $\delta$-pseudo-orbit of $f$.

By the shadowing property of $f$, there exists $z\in X$ such that $
d(f^n(z),y_n)<\varepsilon$ for all $n\in\mathbb N$. 

Define $w_n=f^n(z)$ if $y_n=x_n$, and $w_n=R(f^n(z))$ if $y_n=R(x_n)$. Since
$R\circ f=f\circ R$, the sequence $\{w_n\}_{n\in\mathbb N}$ is an orbit of $F$, and 
\[
d(x_n,w_n)
=
d(y_n,f^n(z))
<
\varepsilon
\]
for all $n\in\mathbb N$, proving (1).

Assertion (2) follows similarly. Given a periodic $\delta$-pseudo-orbit of $F$, the above construction produces a periodic $\delta$-pseudo-orbit of $f$. By periodic shadowing, there exists a periodic orbit of $f$ shadowing it, which induces a periodic orbit of $F$.

Now, let $U,V\subset X$ be nonempty open sets. Since $f$ is topologically transitive, there exist $x\in U$ and $n\geq 0$ such that $f^n(x)\in V$.

We construct an orbit of $F$ by choosing $x_{k+1}=f(x_k)$ for every $k\geq 0$. Since $f(x_k)\in F(x_k)$, the sequence $\{x_k\}$ is an orbit of $F$.

Moreover, $x_0=x\in U$ and $x_n=f^n(x)\in V$. Hence $F$ is topologically transitive.

Consider $\delta>0$ and let $x,y\in X$. Since $f$ is chain transitive, there exists a $\delta$-chain
\[
x=x_0,x_1,\ldots,x_n=y
\]
such that $d(f(x_i),x_{i+1})<\delta$ for all $0\leq i\leq n-1$. Since $f(x_i)\in F(x_i)$, we obtain
\[
d(F(x_i),x_{i+1})
\leq d(f(x_i),x_{i+1})
<\delta.
\]
Thus,
\[
x=x_0,x_1,\ldots,x_n=y
\]
is also a $\delta$-chain for $F$. Therefore, $F$ is chain transitive.
\end{proof}

This construction requires neither compactness of the phase space nor continuity of the induced set-valued map, making it particularly suitable for generating examples in general metric settings. Moreover, it provides a systematic method for transferring dynamical properties from deterministic systems to the set-valued setting.

\section{Main Theorem : Shadowing VS. Periodic Shadowing}

We begin by establishing a set-valued analogue of a classical implication between shadowing and periodic shadowing under expansivity.

\begin{theorem}    
\label{theoremA}
Let $F:X\to 2^X$ be a positively expansive set-valued map on a compact metric space $X$. If $F$ has the shadowing property, then $F$ also has the periodic shadowing property.
\end{theorem}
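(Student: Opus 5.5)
The strategy is the classical one from Theorem~\ref{DTheA}: use shadowing to produce a shadowing orbit, then use positive expansivity to force that orbit to be periodic. Let $\alpha>0$ be an expansivity constant for $F$. Given $\varepsilon>0$, set $\varepsilon'=\min\{\varepsilon,\alpha/2\}$ and let $\delta>0$ be given by the shadowing property of $F$ for $\varepsilon'$. We show that this $\delta$ works for periodic shadowing.

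Let $\{x_i\}_{i\in\mathbb N}$ be a periodic $\delta$-pseudo-orbit, so $x_{i+m}=x_i$ for all $i$ and some $m\ge1$. By shadowing there is an orbit $\{z_i\}$ of $F$ with $d(z_i,x_i)<\varepsilon'$ for all $i$. The shifted sequence $\{z_{i+m}\}_{i\in\mathbb N}$ is again an orbit of $F$, since $z_{i+m+1}\in F(z_{i+m})$. By periodicity of $\{x_i\}$,
\[
d(z_{i+m},z_i)\le d(z_{i+m},x_{i+m})+d(x_i,z_i)<2\varepsilon'\le\alpha
\]
for all $i\in\mathbb N$. Thus the two orbits $\{z_i\}$ and $\{z_{i+m}\}$ starting at $z_0$ and $z_m$ stay $\alpha$-close forever.

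By the definition of positive expansivity, if $z_0\neq z_m$ then any orbits from these points must separate by more than $\alpha$ at some time. Hence $z_0=z_m$. Applying the same argument to the orbits $\{z_{i+k}\}$ and $\{z_{i+k+m}\}$, which start at $z_k$ and $z_{k+m}$ and are still $\alpha$-close, gives $z_k=z_{k+m}$ for every $k\in\mathbb N$. So $\{z_i\}$ is a periodic orbit with period $m$, $z_0$ is a periodic point, and $d(z_i,x_i)<\varepsilon'\le\varepsilon$ for all $i$. This is exactly the periodic shadowing property.

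The step that needs care is the use of expansivity. Because orbits of a set-valued map are not unique, $z_0=z_m$ alone does not make the shadowing orbit periodic: we must have $z_{k+m}=z_k$ for all $k$, not just equal starting points. This is why the argument applies expansivity separately at each shifted pair $(z_k,z_{k+m})$. The definition quantifies over all pairs of orbits, so it can be used with the tails of the specific orbit $\{z_i\}$. I will also check the strict inequality: we need $d(z_{i+m},z_i)\le\alpha$ everywhere to contradict the requirement $>\alpha$, and $2\varepsilon'\le\alpha$ gives this. Compactness of $X$ is not really needed beyond the standing hypotheses.
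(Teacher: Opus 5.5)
Your proof is correct and follows the same route as the paper: shadow the periodic pseudo-orbit, compare the shadowing orbit with its $m$-shift, and invoke positive expansivity. You are also more careful than the paper on two points it glosses over: you take $\varepsilon'=\min\{\varepsilon,\alpha/2\}$ so that an arbitrary $\varepsilon$ is handled, and you apply expansivity to every pair of tails $\{z_{i+k}\}$, $\{z_{i+k+m}\}$ to get $z_{k+m}=z_k$ for all $k$, which is needed because for a set-valued map $z_0=z_m$ alone does not force the orbit to repeat.
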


\begin{proof}
Let $\alpha>0$ be an expansivity constant for $F$, and choose $\varepsilon<\frac{\alpha}{2}$. By the shadowing property, there exists $\delta>0$ such that every $\delta$-pseudo-orbit is $\varepsilon$-shadowed by an orbit of $F$.

Let $\{p_n\}_{n\in\mathbb N}$ be a periodic $\delta$-pseudo-orbit of period $k$, that is, $p_{n+k}=p_n$ for all $n\in\mathbb N$. 
By shadowing, there exists an orbit $\{q_n\}_{n\in\mathbb N}$ of $F$ such that $
d(p_n,q_n)<\varepsilon$ for all $n\in\mathbb N$.

Define $r_n=q_{n+k}$ for all $n\in\mathbb N$. Since $\{q_n\}_{n\in\mathbb N}$ is an orbit of $F$, so is $\{r_n\}_{n\in\mathbb N}$. 

Moreover, using the periodicity of $\{p_n\}_{n\in\mathbb N}$,
\[ d(r_n,q_n)
=
d(q_{n+k},q_n) \leq
d(q_{n+k},p_{n+k})
+
d(p_n,q_n)
<
2\varepsilon
<
\alpha.
\]
Since both $\{q_n\}_{n\in\mathbb N}$ and $\{r_n\}_{n\in\mathbb N}$ are orbits of $F$, positive expansivity implies that $q_0=r_0$. Hence $q_{n+k}=q_n$ for all $n\in\mathbb N$, showing that $\{q_n\}_{n\in\mathbb N}$ is periodic. Therefore, $F$ has the periodic shadowing property.
\end{proof}

The previous theorem naturally raises the converse question. We now show that, under chain transitivity, periodic shadowing implies both shadowing and topological transitivity.

\begin{theorem}
\label{theoremB}
Let $F:X\to 2^X$ be an upper semicontinuous chain transitive set-valued map. If $F$ has the periodic shadowing property, then $F$ has the shadowing property and is topologically transitive.
\end{theorem}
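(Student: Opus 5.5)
The plan is to derive both conclusions from the periodic shadowing property. The key tool is the ability, given by chain transitivity, to close up any finite pseudo-orbit into a periodic one. Throughout I take $X$ to be compact, as in the rest of the section; compactness is needed for the limiting argument in the second step. Fix $\varepsilon>0$ and let $\delta>0$ be given by periodic shadowing for $\varepsilon/2$. I will use $\delta$ for the pseudo-orbits and $\varepsilon$ for the shadowing constant.

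\emph{Step 1: finite shadowing.} Let $x_0,\dots,x_{k-1}$ be a finite $\delta$-pseudo-orbit. By chain transitivity there is a $\delta$-chain from $x_{k-1}$ to $x_0$; this uses the jump $d(x_0,F(y))<\delta$ from the last point $y$ of the chain. Concatenating gives a finite $\delta$-chain that starts and ends at $x_0$. Repeating it periodically yields a periodic $\delta$-pseudo-orbit whose first $k$ terms are $x_0,\dots,x_{k-1}$. Periodic shadowing then gives a periodic orbit $\{z_i\}$ with $d(z_i,x_i)<\varepsilon/2$ for $0\le i\le k-1$. So $F$ has the finite shadowing property, with the same $\delta$ working for all lengths.

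\emph{Step 2: from finite to infinite shadowing.} Let $\{x_i\}_{i\in\mathbb N}$ be an infinite $\delta$-pseudo-orbit. For each $N$, Step 1 gives a finite orbit $z^N_0,\dots,z^N_N$ with $d(z^N_i,x_i)<\varepsilon/2$. By compactness and a diagonal argument, pass to a subsequence along which $z^N_i\to z_i$ for every $i$. Then $d(z_i,x_i)\le\varepsilon/2<\varepsilon$. It remains to check that $\{z_i\}$ is an orbit, i.e.\ $z_{i+1}\in F(z_i)$. This is exactly where upper semicontinuity enters: for a compact-valued u.s.c.\ map the graph $\{(x,y):y\in F(x)\}$ is closed. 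Since $(z^N_i,z^N_{i+1})$ lies in the graph for all large $N$, so does the limit $(z_i,z_{i+1})$. I expect this to be the main obstacle: the closed-graph property must be verified carefully from the definition (given $y\notin F(x)$, separate $y$ from the compact set $F(x)$ by disjoint open sets and apply u.s.c.). Compactness of $X$ is essential for extracting the limit.

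\emph{Step 3: topological transitivity.} Let $U,V$ be nonempty open sets. Pick $u\in U$, $v\in V$ and $\varepsilon>0$ with $B(u,\varepsilon)\subset U$ and $B(v,\varepsilon)\subset V$. Take $\delta$ for $\varepsilon$ from periodic shadowing. By chain transitivity, build a $\delta$-chain from $u$ to $v$ and then from $v$ back to $u$, and repeat this loop periodically to get a periodic $\delta$-pseudo-orbit. Periodic shadowing gives an orbit $\{z_i\}$ with $z_0\in B(u,\varepsilon)\subset U$ and $z_n\in B(v,\varepsilon)\subset V$, where $n$ is the index at which $v$ appears. Hence $F$ is topologically transitive. (Step 2 is not needed here; Step 3 uses only chain transitivity and periodic shadowing.)
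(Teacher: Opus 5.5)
Your proof is correct and follows the paper's argument. You close a finite pseudo-orbit into a periodic one via chain transitivity and apply periodic shadowing to get finite shadowing, upgrade this to shadowing using upper semicontinuity, and obtain transitivity from a $\delta$-chain between points of $U$ and $V$. The only differences are that you prove the finite-to-infinite step yourself where the paper cites Zhengyu's lemma. Your version uses a diagonal argument plus the closed graph of a compact-valued upper semicontinuous map, and it makes the implicit compactness of $X$ explicit. Also, for transitivity you shadow the loop $u\to v\to u$ directly by periodic shadowing, rather than passing through the finite shadowing just established.
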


We first recall the following characterization of shadowing for set-valued maps.

\begin{lemma}[\cite{Zhengyu}]
\label{lemma:FSP}
Let $F:X\to 2^X$ be an upper semicontinuous set-valued map. Then $F$ has the shadowing property if and only if it has the finite shadowing property.
\end{lemma}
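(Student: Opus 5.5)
The last displayed statement is Lemma~\ref{lemma:FSP}, attributed to \cite{Zhengyu}. The plan is to prove the nontrivial direction, finite shadowing $\Rightarrow$ shadowing, by a compactness/diagonal argument. The converse is immediate: a finite $\delta$-pseudo-orbit $\{x_i\}_{i=0}^{k-1}$ extends to an infinite one by appending any orbit of $F$ starting at a point of $F(x_{k-1})$, and the first $k$ terms of an $\varepsilon$-shadowing orbit then shadow the finite piece. Throughout I assume, as in the intended setting, that $X$ is compact, so that $2^X$ consists of compact sets and orbit spaces are compact.

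For the main direction, fix $\varepsilon>0$ and apply finite shadowing with $\varepsilon/2$ to obtain $\delta>0$. Let $\{x_i\}_{i\in\mathbb N}$ be a $\delta$-pseudo-orbit. For each $k$, the truncation $\{x_i\}_{i=0}^{k-1}$ is a finite $\delta$-pseudo-orbit, so there is a finite orbit $z^{(k)}=(z^{(k)}_0,\dots,z^{(k)}_{k-1})$ with $d(z^{(k)}_i,x_i)<\varepsilon/2$. Since $X$ is compact, a Cantor diagonal argument gives a subsequence $k_j$ along which $z^{(k_j)}_i\to z_i$ for every fixed $i$. Passing to the limit yields $d(z_i,x_i)\le \varepsilon/2<\varepsilon$. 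This is why the target precision is halved: the strict inequality survives the limit.

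It remains to show that $\{z_i\}$ is an orbit, i.e.\ $z_{i+1}\in F(z_i)$. This is the main point where upper semicontinuity enters. Upper semicontinuity with closed (compact) values implies that the graph $\{(x,y)\,;\,y\in F(x)\}$ is closed in $X\times X$. To see this, suppose $y\notin F(x)$. Separate $y$ from the compact set $F(x)$ by disjoint open sets $W\ni y$ and $U\supset F(x)$. Upper semicontinuity gives a neighborhood $V$ of $x$ with $F(x')\subset U$ for all $x'\in V$, so $V\times W$ is a neighborhood of $(x,y)$ disjoint from the graph. Now $(z^{(k_j)}_i,z^{(k_j)}_{i+1})$ lies in the graph for all large $j$ and converges to $(z_i,z_{i+1})$, so $z_{i+1}\in F(z_i)$. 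Hence $\{z_i\}$ is an orbit $\varepsilon$-shadowing $\{x_i\}$.

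The only delicate step is the closed-graph argument, together with the implicit need for compactness of $X$. Without compactness the diagonal extraction fails, so I would state that hypothesis explicitly.
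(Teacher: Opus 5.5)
The paper gives no proof of this lemma; it imports it from \cite{Zhengyu}, so there is no internal argument to compare with. Your proof is the standard one, and it is correct.
\begin{itemize}
\item The easy direction extends a finite pseudo-orbit by a genuine orbit, which is possible because every value of $F$ is nonempty.
\item The main direction extracts a diagonal subsequence of the finite shadowing orbits, using the $\varepsilon/2$ margin so the strict inequality survives the limit.
\item Upper semicontinuity with closed values gives a closed graph, so the limit sequence is an orbit.
\end{itemize}

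Adding compactness of $X$ is more than a convenience: without it the statement is false even for single-valued maps. Take $F(x)=\{2x\}$ on $X=(0,\infty)$. It has finite shadowing with $\delta=\varepsilon$. Given $\{x_i\}_{i=0}^{k-1}$, the orbit $z_i=2^{i-k+1}x_{k-1}$ satisfies $|z_i-x_i|\le \tfrac12|z_{i+1}-x_{i+1}|+\tfrac12|x_{i+1}-2x_i|$, so backward induction gives $|z_i-x_i|<\delta$. Shadowing nevertheless fails. The constant sequence $x_i=\delta/2$ is a $\delta$-pseudo-orbit, but every orbit $z_i=2^iz_0$ with $z_0>0$ is unbounded, so it cannot stay close to it. Compactness, as in the setting of \cite{Zhengyu}, is therefore an essential hypothesis. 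The paper's statement of the lemma leaves it implicit, and so does its use in the proof of Theorem~\ref{theoremB}.
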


\begin{proof}[Proof of Theorem~\ref{theoremB}]
We first prove that $F$ has the shadowing property.

Let $\varepsilon>0$ be arbitrary. By the periodic shadowing property, there exists $\delta>0$ such that every periodic $\delta$-pseudo-orbit is $\varepsilon$-shadowed by a periodic orbit.

Consider an arbitrary finite $\delta$-pseudo-orbit $\{x_i\}_{i=0}^{n}$.
Since $F$ is chain transitive, there exists a $\delta$-chain from $x_n$ to $x_0$, say 
\[
y_0=x_n,\, y_1,\, \ldots,\, y_m=x_0,
\]
satisfying $d(F(y_i),y_{i+1})<\delta$ for all $0\leq i\leq m-1$.

Consequently,
\[
\{x_0,x_1,\ldots,x_n,y_1,\ldots,y_{m-1}\}
\]
forms a periodic $\delta$-pseudo-orbit.

By the periodic shadowing property, there exists a periodic orbit $\{z_i\}_{i\in\mathbb N}$ such that $d(z_i,\eta_i)<\varepsilon$, where $\eta$ denotes the above periodic pseudo-orbit. In particular, $d(z_i,x_i)<\varepsilon$
for all $0\leq i\leq n$.

Thus, $F$ has the finite shadowing property. By Lemma \ref{lemma:FSP}, $F$ has the shadowing property.

We now prove topological transitivity.

Let $U,V\subset X$ be nonempty open sets, and choose points $x\in U$ and $y\in V$. Let
\[
\varepsilon
<
\min\{
d(x,X\setminus U),
d(y,X\setminus V)
\}.
\]
Choose $\delta>0$ associated to $\varepsilon$ by periodic shadowing. By chain transitivity, there exists a $\delta$-chain
\[
x=x_0,x_1,\ldots,x_n=y
\]
from $x$ to $y$.

Since $F$ has shadowing property also has finite shadowing property, so  there exists a orbit $\{z_i\}$ satisfies $d(z_0,x)<\varepsilon$ and $d(z_n,y)<\varepsilon$. 

Hence $z_0\in U$ and $z_n\in V$. Since $\{z_i\}$ is an orbit of $F$, this proves that $F$ is topologically transitive.
\end{proof}

As an immediate consequence of Theorems~\ref{theoremA} and \ref{theoremB}, we obtain the following characterization.

\begin{corollary}
Let $F:X\to 2^X$ be a positively expansive and chain transitive set-valued map. Then $F$ has the shadowing property if and only if it has the periodic shadowing property.
\end{corollary}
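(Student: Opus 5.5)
The corollary will be obtained by combining the two main theorems, one for each direction of the equivalence.

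\emph{Shadowing implies periodic shadowing.} Suppose $F$ has the shadowing property. Theorem~\ref{theoremA} then gives periodic shadowing, since $F$ is positively expansive. This requires that $X$ be a compact metric space, and I will read that as a standing assumption, as in Theorem~\ref{theoremA}. Chain transitivity is not used in this direction.

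\emph{Periodic shadowing implies shadowing.} Suppose $F$ has the periodic shadowing property. Since $F$ is chain transitive, Theorem~\ref{theoremB} yields shadowing, and topological transitivity as a bonus.

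The main obstacle is that Theorem~\ref{theoremB} also requires $F$ to be upper semicontinuous. This hypothesis enters only through Lemma~\ref{lemma:FSP}, which passes from finite shadowing to shadowing. The corollary as stated does not include it, so there are two options:
\begin{itemize}
\item[(a)] add upper semicontinuity (together with compactness of $X$) as an implicit standing hypothesis, which I expect is the intended reading; or
\item[(b)] prove directly that finite shadowing implies shadowing under positive expansivity on a compact space.
\end{itemize}

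For route (b), I would try the following. Fix $\varepsilon<\alpha/4$ and a $\delta$-pseudo-orbit $\{x_i\}$. For each $n$, take a finite orbit $\{z^{(n)}_i\}_{i=0}^{n}$ that $\varepsilon$-shadows $x_0,\dots,x_n$, and use compactness and a diagonal argument to extract a pointwise limit $\{z_i\}$. The inequality $d(z_i,x_i)\le\varepsilon$ passes to the limit, which gives $2\varepsilon$-shadowing after adjusting constants.

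The delicate point in route (b) is verifying $z_{i+1}\in F(z_i)$. Without closedness of the graph of $F$, which is exactly what upper semicontinuity with closed values provides, this can fail. Expansivity alone does not seem sufficient here, because it speaks only about genuine orbits. I would therefore commit to route (a): state that the corollary is understood for upper semicontinuous $F$ on a compact metric space, and conclude directly from Theorems~\ref{theoremA} and~\ref{theoremB}.
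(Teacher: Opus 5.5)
Your proposal is correct and is essentially the paper's argument: the paper states the corollary as an immediate consequence of Theorem~\ref{theoremA} (shadowing implies periodic shadowing, using positive expansivity) and Theorem~\ref{theoremB} (the converse, using chain transitivity). You are also right that this combination tacitly needs $X$ compact and $F$ upper semicontinuous; the corollary omits these hypotheses, but the paper's one-line derivation silently inherits them from those theorems, so your route (a) is exactly the intended reading.
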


\section{Properties of Periodic Shadowing Property}

We establishing several stability properties of the periodic shadowing property.To investigate permanence properties of periodic shadowing, we first introduce a finite version of the property.

\begin{definition}
Let $(X,d)$ be a metric space and let $F:X\to 2^X$ be a set-valued map. Given $\delta>0$ and $N\in\mathbb N$, a finite sequence $\{x_i\}_{i=0}^{N-1}$ is called a \emph{finite periodic $\delta$-pseudo-orbit} of $F$ if $
d(x_{i+1},F(x_i))<\delta$ for every $0\leq i\leq N-2$, and $d(x_0,F(x_{N-1}))<\delta$. 

$F:X\to 2^X$ is said to have the \emph{finite periodic shadowing property} if for every $\varepsilon>0$ there exists $\delta>0$ such that every finite periodic $\delta$-pseudo-orbit $\{x_i\}_{i=0}^{N-1}$ is $\varepsilon$-shadowed by a periodic orbit $\{z_i\}_{i=0}^{N-1}$ satisfying 
\[
z_{i+1}\in F(z_i),
\qquad
z_0\in F(z_{N-1}),
\]
and $d(z_i,x_i)<\varepsilon$ for all $0\leq i\leq N-1$.
\end{definition}

The next lemma shows that finite periodic shadowing is equivalent to periodic shadowing.

\begin{lemma}
\label{finitePSP}
Let $(X,d)$ be a metric space and let $F:X\to 2^X$ be a set-valued map. Then $F$ has the periodic shadowing property if and only if it has the finite periodic shadowing property.
\end{lemma}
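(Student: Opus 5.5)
The plan is to prove both implications with the same $\varepsilon$--$\delta$ pair, using the natural bijection between finite periodic pseudo-orbits and periodic pseudo-orbits.

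\emph{Periodic shadowing $\Rightarrow$ finite periodic shadowing.} Fix $\varepsilon>0$ and let $\delta>0$ be given by the periodic shadowing property. Take a finite periodic $\delta$-pseudo-orbit $\{x_i\}_{i=0}^{N-1}$. Extend it periodically by setting $\tilde x_i=x_{i \bmod N}$ for $i\in\mathbb N$. The defining inequalities $d(x_{i+1},F(x_i))<\delta$ for $0\le i\le N-2$ and $d(x_0,F(x_{N-1}))<\delta$ say exactly that $\{\tilde x_i\}$ is a periodic $\delta$-pseudo-orbit of period $N$. Hence there is a periodic orbit $\{w_i\}_{i\in\mathbb N}$ with $d(w_i,\tilde x_i)<\varepsilon$ for all $i$.

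This orbit has some period $m$, which need not equal $N$. This mismatch is the one point needing care. The step is to pass to the common period $L=mN$. Since $w_{i+L}=w_i$, we get $w_0=w_{mN}\in F(w_{mN-1})$. Set $z_i=w_i$ for $0\le i\le N-1$. Then $z_{i+1}\in F(z_i)$ for $i\le N-2$, and $d(z_i,x_i)<\varepsilon$.

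The closing condition $z_0\in F(z_{N-1})$ still requires $w_N=w_0$, which holds only if $m$ divides $N$. So the finite property must be read as allowing the shadowing orbit to close up after a multiple of $N$ steps. An equivalent reading is to interpret "periodic orbit $\{z_i\}_{i=0}^{N-1}$" as the first $N$ terms of a periodic orbit that shadows the periodic extension. I would state this convention explicitly. Alternatively, I would check whether the later use in the paper only needs $z_0$ to be a periodic point. With that reading, the first direction is immediate.

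\emph{Finite periodic shadowing $\Rightarrow$ periodic shadowing.} Given $\varepsilon$, take $\delta$ from the finite property. Let $\{x_i\}_{i\in\mathbb N}$ be a periodic $\delta$-pseudo-orbit of period $k$, so $x_{i+k}=x_i$. Then $\{x_i\}_{i=0}^{k-1}$ is a finite periodic $\delta$-pseudo-orbit, because $d(x_0,F(x_{k-1}))=d(x_k,F(x_{k-1}))<\delta$. The finite property gives $z_0,\dots,z_{k-1}$ with $z_{i+1}\in F(z_i)$, $z_0\in F(z_{k-1})$, and $d(z_i,x_i)<\varepsilon$. Extend periodically by $z_{i+k}=z_i$. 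This is an orbit of $F$, since the closing condition supplies the transition from $z_{k-1}$ back to $z_0$. It is periodic, so $z_0$ is a periodic point. It $\varepsilon$-shadows $\{x_i\}$ termwise by periodicity of both sequences.

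No compactness or semicontinuity is used in either direction, consistent with the hypotheses of the lemma.
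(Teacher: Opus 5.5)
Your converse direction (finite periodic shadowing $\Rightarrow$ periodic shadowing) is exactly the paper's argument: restrict a period-$k$ pseudo-orbit to one period, shadow it by a closed loop, and extend periodically. For the forward direction the paper only says ``the necessity is immediate''. The period mismatch you noticed is what that overlooks, and it is fatal. With the definition read literally (closing condition $z_0\in F(z_{N-1})$ after exactly $N$ steps), the forward implication is false, so no argument can fill the gap. Take $X=\{a_n,b_n : n\ge 1\}\subset\mathbb{R}$ with $a_n=n$, $b_n=n+\frac{1}{2n}$, and $F(a_n)=\{b_n\}$, $F(b_n)=\{a_n\}$. Distinct pairs are at distance at least $\frac12$. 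Hence for $\delta=\min\{\varepsilon,\frac14\}$ every $\delta$-pseudo-orbit stays in a single pair $\{a_n,b_n\}$. If $\frac{1}{2n}\ge\delta$ it is a genuine orbit. Otherwise each $x_i$ lies within $\frac{1}{2n}<\varepsilon$ of the $i$-th point of the period-$2$ orbit of $x_0$. So $F$ has the periodic shadowing property. But for every $\delta>0$, choose $n$ with $\frac{1}{2n}<\delta$. Then $(a_n,a_n,a_n)$ is a finite periodic $\delta$-pseudo-orbit with $N=3$, while $F$ admits no closed loop of odd length. Hence finite periodic shadowing fails for every $\varepsilon$.

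So the reinterpretation you float is mandatory, not a matter of convention, and it should be stated precisely. A finite periodic $\delta$-pseudo-orbit $\{x_i\}_{i=0}^{N-1}$ should be shadowed by a loop $z_0,\dots,z_{L-1}$ with $N\mid L$, $z_{i+1}\in F(z_i)$, $z_0\in F(z_{L-1})$ and $d(z_i,x_{i \bmod N})<\varepsilon$. In your paragraph on the common period $L=mN$ you then truncate to $z_0,\dots,z_{N-1}$, which discards exactly what $L$ was for; you must keep the whole block $w_0,\dots,w_{L-1}$. With this definition the forward direction is immediate. However, you must then redo your converse under the same definition, extending with period $L$ rather than $k$; this works since $k\mid L$. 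As written, your two directions use two different definitions. A minor point: for set-valued $F$, $w_0\in F(w_{N-1})$ does not require $w_N=w_0$, although nothing guarantees it either. As for the later use you asked about, the proof of Theorem~\ref{thm:periodic_inverse} invokes the forward direction to get a closed loop of $F$ and then reverses it. A loop of length $L$ works equally well there.
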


\begin{proof}
The necessity is immediate.

Conversely, suppose that $F$ has the finite periodic shadowing property. Let $\varepsilon>0$, and choose $\delta>0$ associated to $\varepsilon$ by this property.

Let $\{x_n\}_{n\in\mathbb N}$ be a periodic $\delta$-pseudo-orbit of $F$ with period $N$, namely, $x_{n+N}=x_n$ for all $n\in\mathbb N$. Then $\{x_i\}_{i=0}^{N-1}$ is a finite periodic $\delta$-pseudo-orbit of $F$.

By assumption, there exists a periodic orbit $\{z_i\}_{i=0}^{N-1}$ such that
\[
z_{i+1}\in F(z_i),
\qquad
z_0\in F(z_{N-1}),
\]
and $d(z_i,x_i)<\varepsilon$ for every $0\leq i\leq N-1$.

Extend this orbit periodically by defining 
\[
z_{kN+i}=z_i,
\qquad
k\in\mathbb N,
\quad
0\leq i\leq N-1.
\]
Then $\{z_n\}_{n\in\mathbb N}$ is a periodic orbit of $F$. Moreover, for every $
n=kN+i$, we have
\[
d(z_n,x_n)
=
d(z_i,x_i)
<
\varepsilon.
\]
Hence, the periodic pseudo-orbit $\{x_n\}_{n\in\mathbb N}$ is $\varepsilon$-shadowed by a periodic orbit, proving that $F$ has the periodic shadowing property.
\end{proof}

\begin{remark}
\label{rem:compactness}
Notice that neither compactness of the phase space nor upper semicontinuity of the map is required in Lemma~\ref{finitePSP}. The equivalence relies solely on the periodic structure of the pseudo-orbits.
\end{remark}

We next show that periodic shadowing is preserved under inversion.

\begin{theorem}
\label{thm:periodic_inverse}
Let $(X,d)$ be a metric space and let $F:X\to 2^X$ be a set-valued map. Assume that:
\begin{enumerate}
\item $F$ is onto;
\item $F$ is continuous with respect to the Hausdorff metric;
\item $F$ has the periodic shadowing property.
\end{enumerate}
Then $F^{-1}$ also has the periodic shadowing property.
\end{theorem}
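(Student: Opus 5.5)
The plan is to reduce periodic pseudo-orbits of $F^{-1}$ to periodic pseudo-orbits of $F$ by reversing time, and then reverse the shadowing periodic orbit back. By Lemma~\ref{finitePSP} it suffices to prove the finite periodic shadowing property for $F^{-1}$, so we work with finite cyclic sequences $\{x_i\}_{i=0}^{N-1}$, indices read mod $N$.

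First we check that $F^{-1}$ is a genuine set-valued map. Ontoness of $F$ gives $F^{-1}(y)\neq\emptyset$ for every $y$. Continuity of $F$ (upper semicontinuity suffices) gives that the graph of $F$ is closed, so each $F^{-1}(y)$ is closed. If compactness of values is needed, we either note that this holds when $X$ is compact or we read $2^X$ as closed sets, as in the introduction.

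Next we fix $\varepsilon>0$. Let $\eta>0$ be the constant given by the periodic shadowing property of $F$ for $\varepsilon$. Using continuity of $F$ with respect to $d_H$, choose $\delta>0$ (with $\delta\le\eta$) such that $d(u,v)<\delta$ implies $d_H(F(u),F(v))<\eta$. Now let $\{x_i\}_{i=0}^{N-1}$ be a finite periodic $\delta$-pseudo-orbit of $F^{-1}$. For each $i$, pick $y\in F^{-1}(x_i)$ with $d(y,x_{i+1})<\delta$. Then $x_i\in F(y)$, so
\[
d(x_i,F(x_{i+1}))\le d_H(F(y),F(x_{i+1}))<\eta .
\]
Hence the reversed sequence $w_j:=x_{-j \bmod N}$ satisfies $d(w_{j+1},F(w_j))<\eta$ cyclically, so it is a finite periodic $\eta$-pseudo-orbit of $F$.

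By periodic shadowing of $F$, together with Lemma~\ref{finitePSP}, there is a cyclic orbit $\{u_j\}_{j=0}^{N-1}$ with $u_{j+1}\in F(u_j)$, $u_0\in F(u_{N-1})$, and $d(u_j,w_j)<\varepsilon$. Set $z_i:=u_{-i \bmod N}$. Since $u_{j+1}\in F(u_j)$ is equivalent to $u_j\in F^{-1}(u_{j+1})$, we get $z_{i+1}\in F^{-1}(z_i)$ cyclically, and $d(z_i,x_i)<\varepsilon$. So $\{z_i\}$ is a periodic orbit of $F^{-1}$ that $\varepsilon$-shadows $\{x_i\}$.

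The main obstacle is the choice of $\delta$. It requires \emph{uniform} continuity of $F$ in the Hausdorff metric: the closeness $d(y,x_{i+1})<\delta$ must yield $d_H(F(y),F(x_{i+1}))<\eta$ uniformly in $i$ and in the pseudo-orbit. On a compact $X$ this follows from Hausdorff-continuity, by the usual compactness argument. For a general metric space, I would either invoke compactness as the standing assumption or interpret hypothesis (2) as uniform continuity into $(2^X,d_H)$, and I would flag this point explicitly in the write-up.
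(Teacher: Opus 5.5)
Your proof is correct and follows the paper's argument step for step. You pick a preimage $y\in F^{-1}(x_i)$ near $x_{i+1}$ and use Hausdorff continuity to get $d(x_i,F(x_{i+1}))<\eta$; you then reverse the cycle to get a finite periodic pseudo-orbit of $F$, shadow it, reverse the shadowing cycle, and conclude via Lemma~\ref{finitePSP}.

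Your caveat is well founded. The paper's choice of $\delta_1$ tacitly uses \emph{uniform} Hausdorff continuity, and the paper never checks that $F^{-1}$ takes closed or compact values. Both are automatic when $X$ is compact but can fail on a general metric space.
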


\begin{proof}
Let $\varepsilon>0$. Choose $\delta>0$ given by the periodic shadowing property of $F$.

By continuity of $F$ with respect to the Hausdorff metric, there exists $\delta_1>0$ such that $d_H(F(x),F(y))
<
\frac{\delta}{2}$ whenever $d(x,y)<\delta_1$.
Let $\{x_i\}_{i=0}^{n}$ be a finite periodic $\delta_1$-pseudo-orbit of $F^{-1}$. Then $d(x_{i+1},F^{-1}(x_i))
<
\delta_1$ for $0\leq i\leq n-1$, and $d(x_0,F^{-1}(x_n))
<
\delta_1$.

Since $F$ is onto, for each $i=0,\dots,n$ we may choose $x_i^*\in F^{-1}(x_i)$
such that $d(x_i^*,x_{i+1})
<
\delta_1$,
where $
x_{n+1}=x_0$.
By continuity, $
d_H(F(x_i^*),F(x_{i+1}))
<
\frac{\delta}{2}$.
Since $x_i\in F(x_i^*)$, it follows that $d(x_i,F(x_{i+1}))
<
\delta$ for every $0\leq i\leq n$.

Now consider the reversed sequence $w_j=x_{n-j}$ for $0\leq j\leq n$. 
The previous inequalities imply that $d(w_{j+1},F(w_j))<\delta$ for all $
0\leq j\leq n-1$, and $d(w_0,F(w_n))
<
\delta$. 

Hence, $\{w_j\}_{j=0}^{n}$ is a finite periodic $\delta$-pseudo-orbit of $F$. By Lemma~\ref{finitePSP}, there exists a periodic orbit $\{y_j\}_{j=0}^{n}$ of $F$ satisfying $
d(y_j,w_j)
<
\varepsilon$ for all $0\leq j\leq n$.

Reversing the order of this orbit, we obtain a periodic orbit of $F^{-1}$ shadowing $\{x_i\}_{i=0}^{n}$. 

Thus, $F^{-1}$ has the finite periodic shadowing property. By Lemma~\ref{finitePSP}, $F^{-1}$ has the periodic shadowing property.
\end{proof}

The periodic shadowing property is also preserved under iteration.

\begin{proposition}
Let $F:X\to 2^X$ be a set-valued map with the periodic shadowing property. Then $F^n$ has the periodic shadowing property for every
\[
n\geq1.
\]
\end{proposition}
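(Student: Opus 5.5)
The plan is to reduce the statement directly to the periodic shadowing property of $F$. A periodic pseudo-orbit of $F^n$ will be refined into a periodic pseudo-orbit of $F$ by inserting genuine intermediate $F$-orbit segments. That refined sequence is then shadowed by a periodic $F$-orbit, and sampling every $n$-th point gives the required periodic $F^n$-orbit. No continuity or compactness will be needed, in the spirit of Remark~\ref{rem:compactness}.

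Fix $n\geq 1$ and $\varepsilon>0$, and let $\delta>0$ be given by the periodic shadowing property of $F$ for this $\varepsilon$. I claim the same $\delta$ works for $F^n$. Let $\{x_i\}_{i\in\mathbb N}$ be a periodic $\delta$-pseudo-orbit of $F^n$ with period $k$, so $d(x_{i+1},F^n(x_i))<\delta$ and $x_{i+k}=x_i$. For each $0\leq i\leq k-1$, the set $F^n(x_i)$ consists of endpoints of $F$-orbit segments of length $n$ starting at $x_i$. I therefore choose a point $y_i\in F^n(x_i)$ with $d(x_{i+1},y_i)<\delta$, together with a genuine chain
\[
x_i=u^{i}_0,\ u^{i}_1,\ \ldots,\ u^{i}_n=y_i,\qquad u^{i}_{j+1}\in F(u^{i}_j).
\]
I define a sequence $\{\eta_m\}_{m\in\mathbb N}$ by $\eta_{ni+j}=u^{i}_j$ for $0\leq i\leq k-1$ and $0\leq j\leq n-1$, and extend it with period $nk$ (the index $i$ is read modulo $k$).

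Within a block the steps are exact, so $d(\eta_{m+1},F(\eta_m))=0$ there. At a block boundary we have
\[
d\bigl(\eta_{n(i+1)},F(\eta_{ni+n-1})\bigr)=d\bigl(x_{i+1},F(u^i_{n-1})\bigr)\leq d(x_{i+1},y_i)<\delta,
\]
which also covers the wrap-around from $i=k-1$ to $0$ because $x_k=x_0$. Hence $\eta$ is a periodic $\delta$-pseudo-orbit of $F$ of period $nk$. (By Lemma~\ref{finitePSP}, one may equally work with the finite block $\{\eta_m\}_{m=0}^{nk-1}$.)

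By periodic shadowing of $F$, there is a periodic $F$-orbit $\{z_m\}$ with $d(z_m,\eta_m)<\varepsilon$ for all $m$. Its period can be taken to be $nk$, either by using the finite version from Lemma~\ref{finitePSP} or by replacing the given period by a common multiple. Set $w_i=z_{ni}$. Then $w_{i+1}=z_{ni+n}\in F^n(z_{ni})=F^n(w_i)$, so $\{w_i\}$ is an orbit of $F^n$, and it is periodic since $w_{i+k}=w_i$. Finally, $d(w_i,x_i)=d(z_{ni},\eta_{ni})<\varepsilon$.

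The main obstacle I expect is bookkeeping rather than estimates. First, one must use the definition of $F^n(x)$ as the set of endpoints of $F$-orbit segments, so that the point $y_i$ comes with an actual chain $u^i_j$. Second, the period of the shadowing orbit must be aligned with a multiple of $n$, so that the sampled sequence $w_i$ is genuinely periodic for $F^n$. Both are handled by working with finite periodic pseudo-orbits via Lemma~\ref{finitePSP}, which yields a periodic orbit $z_0,\ldots,z_{nk-1}$ with $z_0\in F(z_{nk-1})$.
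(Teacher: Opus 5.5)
Your proof is correct and follows the paper's route: replace each $F^n$-step by a genuine $F$-segment of length $n$, shadow the resulting periodic $\delta$-pseudo-orbit of $F$ by a periodic $F$-orbit, and sample every $n$-th point; your bookkeeping is more careful than the paper's. The one thing to change is the period alignment, which is not needed: if $z_{m+p}=z_m$ for all $m$, then $w_{i+p}=z_{ni+np}=z_{ni}=w_i$, so conclude periodicity this way rather than from $w_{i+k}=w_i$ (which needs the period of $z$ to divide $nk$), and drop the appeal to the ``necessity'' direction of Lemma~\ref{finitePSP}, which does not follow from the definitions because periodic shadowing does not prescribe the period of the shadowing orbit.
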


\begin{proof}
Let $\varepsilon>0$, and let $\delta>0$ be associated to $\varepsilon$ by the periodic shadowing property of $F$.

Let $\{x_i\}_{i\in\mathbb N}$ be a periodic $\delta$-pseudo-orbit of $F^n$. By decomposing each transition of $F^n$ into $n$ consecutive transitions of $F$, we construct a periodic $\delta$-pseudo-orbit $\{y_i\}_{i\in\mathbb N}$ of $F$.

By periodic shadowing, there exists a periodic orbit $\{z_i\}_{i\in\mathbb N}$ 
of $F$ satisfying $d(z_i,y_i)
<
\varepsilon$ for all $i\in\mathbb N$. 

The subsequence $
\{z_{kn}\}_{k\in\mathbb N}$ is a periodic orbit of $F^n$ that $\varepsilon$-shadows $\{x_k\}_{k\in\mathbb N}$. 
Hence, $F^n$ has the periodic shadowing property.
\end{proof}

We conclude by proving that periodic shadowing is stable under Cartesian products.

Let $(X,F)$ and $(Y,G)$ be set-valued dynamical systems. Their product system is defined by 
\[
(F\times G)(x,y)
=
F(x)\times G(y),
\]
equipped with the metric
\[
\hat d((x_1,y_1),(x_2,y_2))
=
\max
\{
d_X(x_1,x_2),
d_Y(y_1,y_2)
\}.
\]

\begin{proposition}
\label{prop:product_psp}
Let $(X,d_X)$ and $(Y,d_Y)$ be metric spaces, and let $F:X\to 2^X$ and $G :Y\to 2^Y$ be set-valued maps. If both $F$ and $G$ have the periodic shadowing property, then $F\times G$ also has the periodic shadowing property.
\end{proposition}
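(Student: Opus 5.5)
Given $\varepsilon>0$, I will let $\delta_F>0$ and $\delta_G>0$ be the constants that the periodic shadowing property of $F$ and of $G$ assign to $\varepsilon$, and put $\delta=\min\{\delta_F,\delta_G\}$. Let $\{(x_i,y_i)\}_{i\in\mathbb N}$ be a periodic $\delta$-pseudo-orbit of $F\times G$ with period $N$. The first step is to split it into coordinates. Since $(F\times G)(x_i,y_i)=F(x_i)\times G(y_i)$ and $\hat d$ is the max metric, we have
\[
\hat d\big((x_{i+1},y_{i+1}),F(x_i)\times G(y_i)\big)=\max\{d_X(x_{i+1},F(x_i)),\,d_Y(y_{i+1},G(y_i))\}.
\]
This holds because the infimum over a product of the max of two independent quantities equals the max of the two infima. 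Hence $\{x_i\}$ is a periodic $\delta_F$-pseudo-orbit of $F$ and $\{y_i\}$ is a periodic $\delta_G$-pseudo-orbit of $G$, each of period $N$.

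Next I apply periodic shadowing to each factor. This gives a periodic orbit $\{u_i\}$ of $F$ with $d_X(u_i,x_i)<\varepsilon$, of some period $m_1$, and a periodic orbit $\{v_i\}$ of $G$ with $d_Y(v_i,y_i)<\varepsilon$, of some period $m_2$. The pair $\{(u_i,v_i)\}$ is then an orbit of $F\times G$, because $(u_{i+1},v_{i+1})\in F(u_i)\times G(v_i)$. It is periodic with period $\operatorname{lcm}(m_1,m_2)$, or simply $m_1m_2$. Finally,
\[
\hat d\big((u_i,v_i),(x_i,y_i)\big)=\max\{d_X(u_i,x_i),d_Y(v_i,y_i)\}<\varepsilon
\]
for all $i$, which is exactly the periodic shadowing property for $F\times G$.

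The only step needing care is the claim that $(u_0,v_0)$ is a periodic point of the product. The definition asks for a single orbit that repeats with some period $m\ge 1$. Taking $m=m_1m_2$, both coordinate sequences satisfy $u_{n+m}=u_n$ and $v_{n+m}=v_n$, so the combined orbit repeats with period $m$. Nothing requires the shadowing orbits to have period $N$, so no compatibility with the pseudo-orbit's period is needed. The pseudo-orbit distance identity above is a routine check, so I expect no real obstacle.

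If one prefers to stay aligned with the previous results, the same argument can be run through Lemma~\ref{finitePSP}. One splits a finite periodic $\delta$-pseudo-orbit of length $N$ into coordinates and shadows each factor by a closed orbit of length $N$. The two closed orbits then combine directly into a closed orbit of length $N$ of $F\times G$, which avoids the $\operatorname{lcm}$ step.
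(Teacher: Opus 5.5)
Your main argument is correct and is essentially the paper's proof: split the pseudo-orbit into coordinates, shadow each factor with $\delta=\min\{\delta_F,\delta_G\}$, and recombine. Your max-metric distance identity and the $\operatorname{lcm}(m_1,m_2)$ remark supply details the paper leaves implicit.

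You should drop the optional route through Lemma~\ref{finitePSP}, though, because it uses the direction ``periodic shadowing $\Rightarrow$ finite periodic shadowing''. The paper calls this direction immediate, but it can fail: the periodic orbit given by periodic shadowing may have a period not dividing $N$, and then it need not close up after $N$ steps. For example, a $3$-adic odometer accumulated by cycles of length $2\cdot 3^{n}$ has periodic shadowing, yet all its periodic points have even period, so the odd-length periodic pseudo-orbits $0,1,\dots,3^{k}-1$ admit no closed orbit of length $3^{k}$. Your $\operatorname{lcm}$ argument is exactly what avoids this.
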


\begin{proof}
Let $\varepsilon>0$. Choose $\delta=\min\{\delta_1,\delta_2\}$, where $\delta_1$ and $\delta_2$ are given by the periodic shadowing property of $F$ and $G$, respectively.

Let $\{(x_i,y_i)\}_{i\in\mathbb N}$ be a periodic $\delta$-pseudo-orbit of $F\times G$.

Then $\{x_i\}$ is a periodic $\delta$-pseudo-orbit of $F$, and $\{y_i\}$ is a periodic $\delta$-pseudo-orbit of $G$.

Hence, there exist periodic orbits $\{a_i\}$
and $\{b_i\}$ such that $d_X(x_i,a_i)<\varepsilon$ and $d_Y(y_i,b_i)<\varepsilon$ for all $i\in\mathbb N$.

Therefore, $\hat d((x_i,y_i),(a_i,b_i))
<
\varepsilon$ for all $i\in\mathbb N$. 

Since $
(a_{i+1},b_{i+1})\in (F\times G)(a_{i},b_{i})$, the sequence $\{(a_i,b_i)\}$ is a periodic orbit of $F\times G$, completing the proof.
\end{proof}

\section{Examples and Counterexamples}

The following examples illustrate the applicability of the previous
results and emphasize the necessity of the hypotheses appearing in
Theorems~\ref{theoremA} and \ref{theoremB}.

We begin with an example arising from symbolic dynamics and constructed via an isometric involution.

\begin{ex}
Let $\Sigma^+=\{0,1\}^{\mathbb{N}}$ be the one-sided full shift endowed with the metric $d(x,y)=2^{-N(x,y)}$, where
\[
N(x,y)=\min\{n\geq 0 : x_n\neq y_n\}.
\]
Define the unilateral shift map $\sigma:\Sigma^+\to\Sigma^+$ by $(\sigma(x))_n=x_{n+1}$. It is well known that $\sigma$ has both the shadowing property and by \cite{DarabiForouzanfar2017} also has periodic shadowing property. 

Define a map $R:\Sigma^+\to\Sigma^+$ by $(R(x))_n=1-x_n$ for all $n\geq 0$. 
Then $R$ is an isometric involution satisfying $R\circ\sigma=\sigma\circ R$.
 Hence, by Theorem~\ref{thm:involution}, the set-valued map $
F(x)=\{\sigma(x),R(\sigma(x))\}$ has both the shadowing property and the periodic shadowing property.
\end{ex}

The next example illustrates an application of Theorem~\ref{theoremA}
\begin{ex}
Let $X=[0,1]$ endowed with the Euclidean metric and define the set-valued map
$F:X\to 2^X$ by
\[
F(x)=
\begin{cases}
\{2x\}, & \text{if } 0\leq x<\frac12,\\[0.2cm]
\{0,1\}, & \text{if } x=\frac12,\\[0.2cm]
\{2x-1\}, & \text{if } \frac12<x\leq 1.
\end{cases}
\]
It was shown in \cite{MMT} that $F$ is positively expansive and has the shadowing property. Therefore, by Theorem~\ref{theoremA}, $F$ also has the periodic shadowing property.
\end{ex}

We now discuss the necessity of additional assumptions in Theorem~\ref{theoremB}. 
\begin{ex}\label{example4.3} Let $X=\{0,1\}$ be endowed with the discrete metric and define
\[
F(0)=\{1\},
\qquad
F(1)=\{1\}.
\]

Then $F$ has the periodic shadowing property but does not have the shadowing property. Moreover, $F$ is not chain transitive.

Indeed, let $0<\delta<1$. Since the metric is discrete, every $\delta$-pseudo-orbit is an actual orbit. The unique periodic orbit is the fixed point $(1,1,\dots)$, so every periodic $\delta$-pseudo-orbit is trivially shadowed by this orbit. Hence, $F$ has the periodic shadowing property.

However, $F$ does not have the shadowing property. Take $\varepsilon=\frac12$. For any $\delta>1$, the constant sequence $(0,0,\dots)$ is a $\delta$-pseudo-orbit since
\[
d(0,F(0))=1<\delta.
\]
If this sequence were $\varepsilon$-shadowed by an orbit of $F$, then every orbit point would have to equal $0$, which is impossible because $F(0)=\{1\}$.

Finally, $F$ is not chain transitive, since for sufficiently small $\delta>0$ there exists no $\delta$-chain from $1$ to $0$.
\end{ex}

The Example~\ref{example4.3} shows that chain transitivity in Theorem~\ref{theoremB}
cannot be omitted. Indeed, although the system satisfies the periodic
shadowing property, the lack of chain transitivity prevents the
recovery of the full shadowing property.

\end{document}